\documentclass[11pt,a4paper,twoside]{article}
\usepackage{amsmath, amsthm, amscd, amsfonts, amssymb, graphicx, color}
\usepackage[bookmarksnumbered, colorlinks, plainpages]{hyperref}
\input{mathrsfs.sty}

\linespread{1.4}
\textheight 23truecm \textwidth 16truecm
\setlength{\oddsidemargin}{0.15in}\setlength{\evensidemargin}{0.15in}
\setlength{\topmargin}{-.5cm}
\newtheorem{theorem}{Theorem}[section]
\newtheorem*{thma}{Theorem A}
\newtheorem*{thmb}{Theorem B}

\newtheorem*{lema}{Lemma A}

\newtheorem{lemma}[theorem]{Lemma}

\theoremstyle{definition}

\theoremstyle{remark}

\newtheorem{remark}[theorem]{Remark}
\begin{document}
\title{\large{AN APPLICATION OF COHN'S RULE TO CONVOLUTIONS\\ OF UNIVALENT HARMONIC MAPPINGS}}
\author{Raj Kumar\,$^a$ \thanks{rajgarg2012@yahoo.co.in}\,\,,\, Sushma Gupta\,$^a$, Sukhjit Singh\,$^a$ and {Michael Dorff\,$^b$\thanks{mdorff@math.byu.edu} }\\
\emph{ \small $^a$\,Sant Longowal Institute of Engineering and Technology, Longowal-148106 (Punjab), India.}\\
\emph{ \small$^b$\, Department of Mathematics, Brigham Young University, Provo, Utah, 84602, USA.}}
\date{}
\maketitle
\begin{abstract}
  Dorff et al. [\ref{do and no}], proved that the harmonic convolution of right half-plane mapping with dilatation $-z$ and mapping $f_\beta=h_\beta+\overline{g}_{\beta}$, where $f_\beta$ is obtained by shearing of analytic vertical strip mapping, with dilatation $e^{i\theta}z^n,\,n=1,2,\, \theta\in\mathbb{R},$ is in $S_H^0$ and is convex in the direction of the real axis. In this paper, by using Cohn's rule,  we generalize this result by considering dilatations $\displaystyle {(a-z)}/{(1-az)}$  and  $e^{i\theta}z^n( n \in \mathbb{N},\, \theta\in\mathbb{R}$) of right half-plane mapping and $f_\beta$, respectively.
\end{abstract}

\vspace{1mm}
{\small
{\bf Key Words}
: Cohn's rule, univalent harmonic mappings, harmonic convolutions.

{\bf AMS Subject Classification:}  30C45.}

\section{Introduction}
  Let $S_H$ denotes the class of all harmonic, sense-preserving  and univalent mappings $f=h+\overline g$ in the unit disk  $E = \{z: |z|<1\}$,  which are normalized by the conditions $ f(0)=0 $ and $f_{z}(0)=1$. Therefore a function $f=h+\overline g$ in the class $S_H$  has the representation, \begin{equation} f(z) = z+ \sum _{n=2}^{\infty} a_nz^n + \sum _{n=1}^{\infty}\overline{ b}{_n}\overline{z}^n, \end{equation} for all $z$ in $E$. Denote by $S_H^0$  the subclass of $S_H$ whose functions satisfy an additional condition of normalization $f_{\overline z}(0)=0$. Lewy's Theorem asserts that a harmonic mapping $f=h+\overline g$ defined in $E$, is locally univalent and sense-preserving if and only if the Jacobian of the mapping, defined by $J_f=|h'|^2-|g'|^2,$ is positive or equivalently, if and only if $h'\not=0$ in $E$ and the dilatation function $\omega$ of $f$, defined by $\displaystyle\omega={g'}/{h'}$, satisfies $|\omega(z)|<1$, for all $z\in E$.  Further, let $K_H(K_H^0)$ be the subclass of  $S_H(S_H^0)$ consisting of functions which map the unit disk $E$ onto convex domains. A domain $\Omega$ is said to be convex in a direction $\phi,\, 0 \leq \phi < \pi,$ if every line parallel to the line joining $0$ and $ e ^{i \phi}$ has a connected intersection with $\Omega$.

    Kumar et al. [\ref{ku and do}], defined the harmonic mappings in the right half-plane $F_a=H_a+\overline{G}{_a}\in K_H$, given by $\displaystyle H_a(z)+G_a(z)={z}/{(1-z)}$  with\,dilatation\, $\displaystyle \omega_a(z)={(a-z)}/{(1-az)},\,a\in (-1,1).$ Then, by using the shearing technique due to Clunie and Sheil-Small [\ref{cl and sh}], we get,  \begin{equation}
  \displaystyle H_a(z) =\frac{\frac{1}{1+a}z-\frac{1}{2}z^2}{(1-z)^2}\quad {\rm and}\quad\displaystyle G_a(z) =\frac{\frac{a}{1+a}z-\frac{1}{2}z^2}{(1-z)^2}.
  \end{equation}
Let $f_\beta=h_\beta+\overline{g}{_\beta}$ be the collection of harmonic mappings obtained by shearing of analytic vertical strip mapping \begin{equation}\displaystyle h_\beta(z)+g_\beta(z)=\frac{1}{2i\sin\beta}\log\left(\frac{1+ze^{i\beta}}{1+ze^{-i\beta}}\right),0<\beta<\pi.\end{equation}
For more details about this map see Dorff [\ref{do 2}].\\
\indent Harmonic Convolution (Hadamard product) of two harmonic functions $F(z) = H(z) + \overline G(z)=z + \sum_{n=2}^\infty A_n z^n  +
   \sum_{n=1}^\infty{\overline B}{_n} {\overline z} {^n}$ and
$ f(z)= h(z)+ \overline g(z)  =z+\sum_{n=2}^\infty a_n z^n  +   \sum_{n=1}^\infty\overline
   {b}{_n}\overline{z}{^n}$ in $S_H$ is defined as,
$$ \begin{array}{clll}
(F {\ast} f)(z)&=&  (H {\ast} h)(z) +\overline{(G {\ast} g)}(z)\\
 &= & z+ \sum_{n=2}^\infty a_n A_n z^n + \sum_{n=1}^\infty\overline {{b}}{_n}\overline{{B}}{_n}\overline {z}{^n}.
\end{array} $$

It is well known that unlike the case of conformal mappings, harmonic convolution of two mappings from the class $K_H^0$ may not be in $K_H^0,$ may not be even in $S_H^0$. Also, if $f_1\in K_H^0$ and $f_2\in S_H^0$, then $f_1\ast f_2$ is not necessarily in $S_H^0$. These facts generated a lot of interest in the study of harmonic convolutions of univalent harmonic mappings and a good number of papers recently appeared in the literature on this topic (for example see [\ref{do}, \ref{do and no}, \ref{go}, \ref{ku and do}, \ref{li and po}, \ref{li and po 2}]). In particular, Dorff [\ref{do}] proved the following result:

\begin{thma} Let $f=h+\overline{g}$  with  $h(z)+g(z)=\displaystyle{z}/{(1-z)}$  be the right half-plane mapping and  $f_\beta=h_\beta+\overline{g}{_\beta}\in K_H^0$ be given by (3) with $ \displaystyle {\pi}/{2}\leq\beta<\pi$. Then $f\ast f_\beta \in S^0_H$ and is convex in the direction of the real axis, provided $f\ast f_\beta$ is locally univalent and sense-preserving.\end{thma}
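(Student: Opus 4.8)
To prove Theorem A the plan is to appeal to the shearing theorem of Clunie and Sheil-Small [\ref{cl and sh}], which says that a sense-preserving harmonic local homeomorphism $F=H+\overline{G}$ of $E$ is univalent and maps $E$ onto a domain convex in the direction of the real axis exactly when the analytic function $H-G$ is univalent and maps $E$ onto such a domain. Accordingly I would take $f=F_{0}$, i.e. the mapping in $(2)$ with $a=0$ (so that its dilatation is $\omega_{0}(z)=-z$), set $\psi_{\beta}:=h_{\beta}+g_{\beta}$, for which $\psi_{\beta}'(z)=\big[(1+ze^{i\beta})(1+ze^{-i\beta})\big]^{-1}=(1+2z\cos\beta+z^{2})^{-1}$, and write $\omega_{\beta}:=g_{\beta}'/h_{\beta}'$, which satisfies $|\omega_{\beta}(z)|<1$ on $E$ because $f_{\beta}\in K_{H}^{0}$. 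The statement then reduces to showing that $H^{*}-G^{*}$ is univalent and convex in the direction of the real axis, where $H^{*}$ and $G^{*}$ are the analytic and co-analytic parts of $f\ast f_{\beta}$; the assumed local univalence and sense-preservation of $f\ast f_{\beta}$ supplies the remaining hypothesis of the shearing theorem.

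First I would carry out the convolution bookkeeping. Writing $H_{0}=\dfrac{z}{(1-z)^{2}}-\dfrac12\,\dfrac{z^{2}}{(1-z)^{2}}$ and $G_{0}=-\dfrac12\,\dfrac{z^{2}}{(1-z)^{2}}$ from $(2)$ with $a=0$, and using the elementary identities $\dfrac{z}{(1-z)^{2}}\ast\phi=z\phi'$ and $\dfrac{z^{2}}{(1-z)^{2}}\ast\phi=z\phi'-\phi$, I obtain for $f\ast f_{\beta}=H^{*}+\overline{G^{*}}$ that
\[
H^{*}=H_{0}\ast h_{\beta}=\frac{h_{\beta}+zh_{\beta}'}{2},\qquad G^{*}=G_{0}\ast g_{\beta}=\frac{g_{\beta}-zg_{\beta}'}{2},
\]
and hence $\Phi:=H^{*}-G^{*}=\dfrac12\big((h_{\beta}-g_{\beta})+z\psi_{\beta}'\big)$. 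Since $H^{*}(0)=G^{*}(0)=0$, $(H^{*})'(0)=1$ and $(G^{*})'(0)=0$, the $S_{H}^{0}$-normalisation is automatic, so everything comes down to analysing $\Phi$.

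To show $\Phi$ is univalent and convex in the direction of the real axis I would invoke the standard analytic criterion for that property: it suffices to produce a real $\nu$ with $\operatorname{Re}\big[(1-2z\cos\nu+z^{2})\Phi'(z)\big]>0$ on $E$. The natural choice is $\nu=\pi-\beta$, so the multiplier is $1+2z\cos\beta+z^{2}=1/\psi_{\beta}'(z)$ and it clears both ingredients of $\Phi$ simultaneously. Differentiating $\Phi$, substituting $h_{\beta}'=\psi_{\beta}'/(1+\omega_{\beta})$, and using the one-line identity $1+z\psi_{\beta}''/\psi_{\beta}'=(1-z^{2})\psi_{\beta}'$, I expect to reach
\[
2\,(1+2z\cos\beta+z^{2})\,\Phi'(z)=\frac{1-z^{2}}{1+2z\cos\beta+z^{2}}+\frac{1-\omega_{\beta}(z)}{1+\omega_{\beta}(z)} .
\]
The second term has positive real part since $|\omega_{\beta}|<1$; for the first, the partial-fraction decomposition $\dfrac{1-z^{2}}{1+2z\cos\beta+z^{2}}=-1+\dfrac{1}{1+ze^{i\beta}}+\dfrac{1}{1+ze^{-i\beta}}$ makes positivity transparent, because each $1+ze^{\pm i\beta}$ lies in the disc $\{|w-1|<1\}$, whence $\operatorname{Re}\dfrac{1}{1+ze^{\pm i\beta}}>\tfrac12$. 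Adding, $\operatorname{Re}\big[(1+2z\cos\beta+z^{2})\Phi'(z)\big]>0$ on $E$; in particular $\Phi'\neq0$, and $\Phi$ is univalent and convex in the direction of the real axis. Feeding this into the Clunie--Sheil-Small theorem applied to $f\ast f_{\beta}=H^{*}+\overline{G^{*}}$ (locally univalent and sense-preserving by hypothesis) with $H^{*}-G^{*}=\Phi$ gives that $f\ast f_{\beta}$ is univalent and convex in the direction of the real axis, hence lies in $S_{H}^{0}$.

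The main obstacle I anticipate is precisely the middle display and the positivity estimate built on it: the non-routine insight is that the single analytic multiplier $1+2z\cos\beta+z^{2}=1/\psi_{\beta}'$ handles the $z\psi_{\beta}'$ term and the $h_{\beta}-g_{\beta}$ term at once, after which the identity $1+z\psi_{\beta}''/\psi_{\beta}'=(1-z^{2})\psi_{\beta}'$ and the partial-fraction split reduce matters to two elementary positive-real-part facts. (The restriction $\pi/2\le\beta<\pi$ is what makes $f_{\beta}$ available in $K_{H}^{0}$; the convolution estimate itself goes through for every $\beta\in(0,\pi)$.)
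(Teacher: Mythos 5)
There is a genuine gap: you have proved a special case of Theorem A, not the theorem itself. In the statement, $f=h+\overline{g}$ is \emph{any} harmonic right half-plane mapping satisfying $h(z)+g(z)=z/(1-z)$ --- its dilatation is arbitrary --- and this generality is exactly what the present paper needs, since Theorems 2.3 and 2.5 invoke Theorem A with $f=F_a$, whose dilatation is $(a-z)/(1-az)$, not $-z$. By setting $f=F_0$ at the outset you fix the dilatation to be $-z$, and everything downstream depends on that choice: the identities $H^{*}=\tfrac12(h_\beta+zh_\beta')$ and $G^{*}=\tfrac12(g_\beta-zg_\beta')$ hold only because $H_0$ and $G_0$ are the explicit rational combinations of $z/(1-z)^2$ and $z^2/(1-z)^2$, so that convolution acts as a first-order differential operator. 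For a general $h$ with $h+g=z/(1-z)$ the only identity available from the hypothesis is $h\ast h_\beta-g\ast g_\beta=h\ast(h_\beta+g_\beta)-g_\beta$, and the term $h\ast\psi_\beta$ cannot be expressed through $h_\beta$, $g_\beta$ and their derivatives; your key display $2(1+2z\cos\beta+z^2)\Phi'=\frac{1-z^2}{1+2z\cos\beta+z^2}+\frac{1-\omega_\beta}{1+\omega_\beta}$ is simply not available. Handling arbitrary $h$ is where Dorff's original argument (the paper quotes Theorem A from [2] without proof, so there is no in-paper proof to compare with) uses genuinely different convolution machinery, and it is also where the hypothesis $\pi/2\le\beta<\pi$ is actually needed; your closing parenthetical, asserting that this restriction only serves to put $f_\beta$ in $K_H^0$ and that ``the convolution estimate goes through for every $\beta\in(0,\pi)$,'' is unsupported once the dilatation of $f$ is no longer $-z$.

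That said, the special case you do treat is handled correctly: the convolution bookkeeping agrees with Lemma 2.1 of the paper at $a=0$, the identity $1+z\psi_\beta''/\psi_\beta'=(1-z^2)\psi_\beta'$ and the partial-fraction positivity argument are sound, and the criterion ``$\operatorname{Re}\bigl[(1-2z\cos\nu+z^2)\Phi'(z)\bigr]>0$ implies $\Phi$ is univalent and convex in the direction of the real axis'' is a known lemma (of Royster--Ziegler/Pommerenke type) legitimately combined with the Clunie--Sheil-Small shearing theorem. So what you have is in essence a clean proof of the $n$-free part of Theorem B's setting (right half-plane map with dilatation $-z$ convolved with $f_\beta$, granting local univalence), valid for all $\beta\in(0,\pi)$; to prove Theorem A you would need an argument that works for an arbitrary dilatation of $f$, e.g.\ one based on $h\ast(h_\beta+g_\beta)-g_\beta$ together with a convolution theorem controlling $h\ast\psi_\beta$ for every right half-plane $h$.
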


Recently, Dorff et al. [\ref{do and no}] obtained the following result which shows that the condition of convolution being locally univalent and sense-preserving as required in Theorem A, can be dropped in some special cases but not in general.

\begin{thmb} If $f=h+\overline{g},$ where $h(z)+g(z)=\displaystyle{z}/{(1-z)}$  is a right half-plane mapping with dilatation  $\omega(z)=-z$ and $f_\beta=h_\beta+\overline{g}{_\beta}$ is the harmonic map as defined in Theorem A above with dilatation $\omega_n(z)=e^{i\theta}z^n$; then for $n=1,2,$ $f\ast f_\beta \in S^0_H$ and is convex in the direction of the real axis.\end{thmb}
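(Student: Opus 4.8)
The plan is to verify that $F:=f\ast f_\beta$ is locally univalent and sense-preserving and then quote Theorem~A for the rest. Taking $a=0$ in~(2) gives the pieces of the right half-plane map, $h(z)=(z-\tfrac12z^{2})/(1-z)^{2}=\tfrac12\bigl(z/(1-z)^{2}+z/(1-z)\bigr)$ and $g(z)=-\tfrac12z^{2}/(1-z)^{2}=\tfrac12\bigl(z/(1-z)-z/(1-z)^{2}\bigr)$. Using $\phi\ast z/(1-z)=\phi$ and $\phi\ast z/(1-z)^{2}=z\phi'$, the convolution is $F=H+\overline G$ with $H=\tfrac12(zh_\beta'+h_\beta)$ and $G=\tfrac12(g_\beta-zg_\beta')$, so $H'=h_\beta'+\tfrac12zh_\beta''$ and $G'=-\tfrac12zg_\beta''$. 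Since ``locally univalent and sense-preserving'' means exactly $J_F=|H'|^{2}-|G'|^{2}>0$ on $E$, i.e.\ $|G'(z)|<|H'(z)|$ everywhere in $E$, the whole problem reduces to bounding the dilatation $\widetilde\omega=G'/H'$ of $F$ --- an inequality which, incidentally, already rules out common critical points of $H'$ and $G'$.

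To get a usable formula for $\widetilde\omega$, differentiate~(3): after simplification $h_\beta'+g_\beta'=1/(1+2z\cos\beta+z^{2})$, and combining this with $g_\beta'=e^{i\theta}z^{n}h_\beta'$ gives $h_\beta'=1/(AB)$, where $A=1+e^{i\theta}z^{n}$ and $B=1+2z\cos\beta+z^{2}$; note $A$ and $B$ are zero-free on $E$. A short logarithmic-derivative computation gives $zh_\beta''/h_\beta'=-n+n/A-(B-1+z^{2})/B$, whence, writing $G'=-\tfrac12e^{i\theta}z^{n}(nh_\beta'+zh_\beta'')$ and $H'=\tfrac12(2h_\beta'+zh_\beta'')$, one obtains the closed form $\widetilde\omega(z)=-e^{i\theta}z^{n}\,U(z)/V(z)$ with $U=nB-A(B-1+z^{2})$ and $V=(2-n)AB+U$.

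For $n=2$ this simplifies dramatically, since $V=U$: so $\widetilde\omega\equiv-e^{i\theta}z^{2}$ as a rational function, provided $U\neq0$ on $E$ (a zero of $U$ in $E$ would be a common zero of $H'$ and $G'$ and would destroy local univalence). Here $\tfrac12U=(1+z\cos\beta)-e^{i\theta}z^{3}(z+\cos\beta)$, and because $|\cos\beta|<1$ the Schwarz--Pick inequality gives $\lvert(1+z\cos\beta)/(z+\cos\beta)\rvert>1>\lvert z\rvert^{3}$ for $z\in E$ (the points $z=0$ and $z=-\cos\beta$ being handled separately), so $U$ has no zero in $E$. Thus $|\widetilde\omega(z)|=|z|^{2}<1$, and Theorem~A finishes the case $n=2$.

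The case $n=1$ carries the real content. Here $V=AB+U$, so $\widetilde\omega=-e^{i\theta}z\cdot w/(w+1)$ with $w:=U/(AB)=1/A-1+(1-z^{2})/B$. The key estimate is $\operatorname{Re}w>-\tfrac12$ on $E$: the map $z\mapsto 1/(1+e^{i\theta}z)$ carries $E$ into the half-plane $\{\operatorname{Re}>\tfrac12\}$, while $(1-z^{2})/(1+2z\cos\beta+z^{2})$, which by partial fractions equals $1/(1+ze^{i\beta})+1/(1+ze^{-i\beta})-1$, has positive real part in $E$ because each term has real part exceeding $\tfrac12$; adding, $\operatorname{Re}w>\tfrac12-1+0=-\tfrac12$. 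Then $|w+1|^{2}=|w|^{2}+2\operatorname{Re}w+1>|w|^{2}\ge|z|^{2}|w|^{2}$ for $z\in E$, i.e.\ $|\widetilde\omega(z)|<1$, and Theorem~A applies once more. The main obstacle is precisely this $n=1$ estimate: $\widetilde\omega$ is now a genuine non-constant rational map and one must show its cubic denominator $V$ does not vanish in $E$. The real-part decomposition above is the short route; the structurally robust alternative, and the one that generalizes to arbitrary dilatation $e^{i\theta}z^{n}$ (and to $(a-z)/(1-az)$ in place of $-z$), is to apply Cohn's rule to $V$, reducing ``no zeros in $E$'' to a short chain of coefficient inequalities ending in a manifestly nonnegative quantity.
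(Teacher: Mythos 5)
Your argument is correct, and it takes a genuinely different route from the paper. You arrive at the same dilatation as the paper's Lemma 2.1 specialized to $a=0$ (your $\widetilde\omega=-e^{i\theta}z^n U/V$ is exactly $zp/p^*$ up to the common factor $2e^{i\theta}$ when $n=1$), but where the paper runs everything through Cohn's rule (Lemma A) together with the trigonometric coefficient inequalities of Lemma 2.2 — and in fact never reproves Theorem B directly: it recovers $n=1$ as the $a=0$ case of Theorem 2.3 (Remark 2.4) and simply cites Dorff et al.\ for $n=2$ — you avoid polynomial zero-counting altogether. For $n=2$ you exploit the collapse $V=U$, so that $\widetilde\omega\equiv-e^{i\theta}z^2$ once $U$ is shown zero-free, which you get from the disk-automorphism inequality $|z+\cos\beta|<|1+z\cos\beta|$ (with the points $z=0$, $z=-\cos\beta$ checked directly, using $\sin\beta\neq0$); for $n=1$ you use the Herglotz-type bounds $\operatorname{Re}\,1/(1+\zeta)>\tfrac12$ and the partial-fraction identity $(1-z^2)/(1+2z\cos\beta+z^2)=1/(1+ze^{i\beta})+1/(1+ze^{-i\beta})-1$ to get $\operatorname{Re}\,w>-\tfrac12$, hence $|w/(1+w)|<1$ and $|\widetilde\omega|<1$; both cases also give $V\neq0$, i.e.\ $H'\neq0$, so local univalence is genuine and Theorem A applies. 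What each approach buys: yours is shorter, self-contained (including an independent proof of the $n=2$, $a=0$ case the paper only quotes), and conceptually transparent; the paper's Cohn-rule machinery is more mechanical but is precisely what scales to the family $F_a$ with dilatation $(a-z)/(1-az)$ and to higher powers $z^n$, where a clean real-part decomposition like yours is no longer available — a point you correctly acknowledge at the end.
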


 Proceeding as in Remark 1 in [\ref{do and no}] one can verify that Theorem B does not hold true for $n\geq3.$ Aim of the present paper is to study convolutions of more general harmonic mappings $F_a=H_a+\overline{G}{_a},$ where $H_a$ and $G_a$ are defined by (2) and the mappings $f_\beta=h_\beta+\overline{g}{_\beta}$, where $h_\beta$ and $g_\beta$ are the solutions of (3), with dilatation $\displaystyle \omega_n(z)={g_\beta'(z)}/{h_\beta'(z)}=e^{i\theta}z^n, \,n\in\mathbb{N}.$ We shall find the range of real constant $a$ such that $F_a\ast f_\beta\in S_H^0$ and is convex in the direction of the real axis even if $n\geq3$. Cohn's rule stated below shall play a central role in the proofs of our results in this paper.
\begin{lema} (Cohn's Rule [\ref{ra and sc}, p.375])  Given a polynomial $$t(z)= a_0 + a_1z + a_2z^2+...+a_nz^n$$ of
degree $n$, let $$ t^*(z)=\displaystyle z^n\overline{t\left(\frac{1}{\overline z}\right)} = \overline {a}_n + \overline {a}_{n-1}z + \overline {a}_{n-2}z^2 +...+ \overline{a}_0z^n.$$
Denote by $r$ and $s$ the number of zeros of $t$ inside and on the unit circle $|z|=1$, respectively.
 If $|a_0|<|a_n|,$ then $$ t_1(z)= \frac{\overline {a}_n t(z)-a_0t^*(z)}{z}$$ is of degree $n-1$ and has $r_1=r-1$ and $s_1=s$ number
 of zeros inside and on the unit circle $|z|=1$, respectively.\end{lema}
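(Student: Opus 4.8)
The plan is to deduce Cohn's rule from Rouché's theorem, handling first the case in which $t$ has no zeros on $|z|=1$ and then reducing the general case to it by peeling off the boundary zeros. Writing $T(z):=\overline{a}_n t(z)-a_0 t^*(z)$, so that $t_1(z)=T(z)/z$, I would first record three elementary facts. (i) The constant term of $T$ is $\overline{a}_n a_0-a_0\overline{a}_n=0$ and its leading coefficient is $|a_n|^2-|a_0|^2>0$; hence $T$ vanishes at the origin, $t_1$ is a genuine polynomial, and $\deg t_1=n-1$. (ii) On $|z|=1$ one has $1/\overline{z}=z$, so $|t^*(z)|=|t(z)|$ there; moreover, if $|z_0|=1$ and $t(z_0)=0$ to order $m$, the same substitution shows $t^*(z_0)=0$ to order exactly $m$, so the boundary zeros of $t$ and $t^*$ coincide with multiplicities. (iii) Since $|a_0/a_n|$ is the product of the moduli of all roots of $t$, the hypothesis $|a_0|<|a_n|$ forces $r\ge 1$, so that the assertion ``$r_1=r-1$'' is meaningful.

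If $s=0$, I would apply Rouché's theorem on $|z|=1$ to $f:=\overline{a}_n t$ and $g:=-a_0 t^*$: there $t$ is zero-free, so $|f(z)|=|a_n|\,|t(z)|>|a_0|\,|t(z)|=|a_0|\,|t^*(z)|=|g(z)|$, whence $T=f+g$ has exactly $r$ zeros in $|z|<1$, as many as $\overline{a}_n t$. Dividing by $z$ deletes one zero at the origin, giving $r_1=r-1$. That $t_1$ has no zeros on $|z|=1$ follows because $t_1(\zeta)=0$ with $|\zeta|=1$ would give $\overline{a}_n t(\zeta)=a_0 t^*(\zeta)$, and taking moduli (with $t(\zeta)\neq 0$) would force $|a_n|=|a_0|$. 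Thus $s_1=0=s$.

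For the general case I would factor $t=c\,d$, where $d$ is the monic polynomial whose zeros are precisely the zeros of $t$ on $|z|=1$ counted with multiplicity ($\deg d=s$) and $c$ has degree $n-s$, has $r$ zeros in $|z|<1$, and none on $|z|=1$. Using $(cd)^*=c^*d^*$, the relation $d^*=\overline{d_0}\,d$ with $d_0:=d(0)$, $|d_0|=1$ (valid because $d$ is monic with all roots on the circle), and the coefficient identities $a_n=c_{n-s}$, $a_0=c_0 d_0$, one computes
\[
T(z)=\overline{a}_n t(z)-a_0 t^*(z)=d(z)\bigl(\overline{c_{n-s}}\,c(z)-c_0\,c^*(z)\bigr)=d(z)\,z\,c_1(z),
\]
where $c_1$ is the Cohn reduction of $c$ (legitimate since $|c_0|=|a_0|<|a_n|=|c_{n-s}|$), so that $t_1=d\,c_1$. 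Applying the already settled case $s=0$ to $c$ gives that $c_1$ has degree $n-s-1$, has $r-1$ zeros in $|z|<1$, and none on $|z|=1$; since $d$ supplies exactly $s$ zeros, all on the circle and none inside, it follows that $\deg t_1=n-1$, $r_1=r-1$ and $s_1=s$.

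I expect the factorization identity $t_1=d\,c_1$ to be the step demanding the most care: it rests on the bookkeeping $a_0=c_0d_0$, $a_n=c_{n-s}$ and on the relation $d^*=\overline{d_0}\,d$, and on checking degrees and multiplicities of the three factors; everything else is either a one-line computation or a direct appeal to Rouché's theorem in the boundary-free case.
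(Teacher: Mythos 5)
Your proposal is correct, but note that there is nothing in the paper to compare it against: the paper states Cohn's rule as Lemma A and simply cites Rahman and Schmeisser [\ref{ra and sc}, p.\ 375] for it, offering no proof of its own. Judged on its own terms, your argument is complete. The three preliminary facts are right: $T(0)=\overline{a}_na_0-a_0\overline{a}_n=0$ and the $z^n$-coefficient of $T$ is $|a_n|^2-|a_0|^2>0$, so $t_1$ is a polynomial of degree $n-1$; $|t^*|=|t|$ on $|z|=1$ with matching boundary zeros and multiplicities; and $|a_0/a_n|=\prod_j|\zeta_j|<1$ guarantees $r\geq 1$, so $r_1=r-1$ makes sense (and, applied to the factor $c$, keeps the reduced case consistent). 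In the case $s=0$ the Rouch\'e comparison $|\overline{a}_nt|>|a_0t^*|$ on the circle is legitimate precisely because $t$ is zero-free there, and the modulus argument ruling out zeros of $t_1$ on $|z|=1$ is clean. The key factorization $t_1=d\,c_1$ also checks out: with $d$ monic and all its roots unimodular one has $d^*=\overline{d_0}\,d$ with $|d_0|=1$, so $a_0t^*=c_0d_0\overline{d_0}\,c^*d=c_0c^*d$ and hence $T=d\,(\overline{c_{n-s}}\,c-c_0c^*)=d\,z\,c_1$, with $|c_0|=|a_0|<|a_n|=|c_{n-s}|$ making the reduction of $c$ admissible; the zero counts of $d$ and $c_1$ then add up to $r_1=r-1$, $s_1=s$, $\deg t_1=n-1$. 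This Rouch\'e-plus-boundary-factor route is essentially the standard textbook proof of Cohn's rule, so it supplies exactly the justification the paper delegates to its reference.
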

 \section{Main results}
In addition to Cohn's rule stated above, we begin by proving following two results which are also essential for the proofs of our main results.
\begin{lemma} Let $f_\beta= h_\beta+\overline{g}{_\beta}\,\in K_H^0$ be given by (3) with dilatation $\displaystyle \omega={g_\beta'}/{h_\beta'}$ and $F_a=H_a+\overline{G}{_a}$ be a mapping in the right half-plane defined by (2). Then $\widetilde{\omega}$, the dilatation of $F_a\ast f_\beta$, is given by \begin{equation} \displaystyle \widetilde{\omega}(z)=\left[\frac{2\omega(1+\omega)(a+azcos\beta+zcos\beta+z^2)-z\omega'(1-a)(1+2zcos\beta+z^2)}{2(1+zcos\beta+azcos\beta+az^2)(1+\omega)-z\omega'(1-a)(1+2zcos\beta+z^2)}\right].\end{equation}\end{lemma}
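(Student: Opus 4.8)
The plan is to obtain (4) by a direct computation of the analytic and co-analytic parts of $F_a\ast f_\beta$ together with their derivatives. Recall that the dilatation of the harmonic convolution $F_a\ast f_\beta=(H_a\ast h_\beta)+\overline{(G_a\ast g_\beta)}$ is $\widetilde\omega=(G_a\ast g_\beta)'/(H_a\ast h_\beta)'$. First I would exploit the dilatation relation for $F_a$: since $\omega_a(z)=(a-z)/(1-az)$ one has $1+\omega_a(z)=(1+a)(1-z)/(1-az)$, so from $H_a'+G_a'=1/(1-z)^2$ and $G_a'=\omega_a H_a'$ we get $H_a'(z)=(1-az)/\big((1+a)(1-z)^3\big)$ and $G_a'(z)=(a-z)/\big((1+a)(1-z)^3\big)$, consistent with (2). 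Writing $zH_a'$ and $zG_a'$ as combinations of $z/(1-z)^3$ and $z^2/(1-z)^3$,
\[
zH_a'=\frac{1}{1+a}\left(\frac{z}{(1-z)^3}-a\,\frac{z^2}{(1-z)^3}\right),\qquad
zG_a'=\frac{1}{1+a}\left(a\,\frac{z}{(1-z)^3}-\frac{z^2}{(1-z)^3}\right).
\]

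Next I would apply the elementary Hadamard identities $z(\Phi\ast\Psi)'=(z\Phi')\ast\Psi$, $\dfrac{z}{(1-z)^3}\ast\Psi=z\Psi'+\tfrac12z^2\Psi''$ and $\dfrac{z^2}{(1-z)^3}\ast\Psi=\tfrac12z^2\Psi''$, the last two being immediate from comparison of Taylor coefficients. Taking $\Psi=h_\beta$ and $\Psi=g_\beta$ and dividing by $z$ turns the preceding display into
\[
(H_a\ast h_\beta)'=\frac{1}{1+a}\Big(h_\beta'+\tfrac{1-a}{2}\,zh_\beta''\Big),\qquad
(G_a\ast g_\beta)'=\frac{1}{1+a}\Big(ag_\beta'-\tfrac{1-a}{2}\,zg_\beta''\Big),
\]
whence $\widetilde\omega=\big(2ag_\beta'-(1-a)zg_\beta''\big)/\big(2h_\beta'+(1-a)zh_\beta''\big)$ after cancelling $1/(1+a)$.

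It then remains to insert the data of (3). Writing $\phi=h_\beta+g_\beta$, the shearing equation $g_\beta'=\omega h_\beta'$ gives $h_\beta'=\phi'/(1+\omega)$ and $g_\beta'=\omega\phi'/(1+\omega)$; differentiating once more and simplifying (using $(1+\omega)-\omega=1$) yields $h_\beta''=\phi''/(1+\omega)-\phi'\omega'/(1+\omega)^2$ and $g_\beta''=\phi'\omega'/(1+\omega)^2+\omega\phi''/(1+\omega)$. From (3) one computes directly $1/\phi'(z)=1+2z\cos\beta+z^2$ and $z\phi''(z)/\phi'(z)=-2(z\cos\beta+z^2)/(1+2z\cos\beta+z^2)$. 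Substituting these into the last fraction for $\widetilde\omega$ and multiplying numerator and denominator by $(1+\omega)^2(1+2z\cos\beta+z^2)/\phi'$, the coefficient of $2\omega(1+\omega)$ in the numerator collapses to $a+az\cos\beta+z\cos\beta+z^2$ and that of $2(1+\omega)$ in the denominator to $1+z\cos\beta+az\cos\beta+az^2$, while in each case the remaining term is exactly $-(1-a)z\omega'(1+2z\cos\beta+z^2)$; this is (4).

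The genuine content lies in the second paragraph, namely the observation that the factor $(1-az)/(1-z)^3$ in $H_a'$ allows $(H_a\ast h_\beta)'$ to be written as a first-order differential operator applied to $h_\beta$; after that everything is bookkeeping. The step I expect to require the most care is the final algebraic collapse: one must carry the factor $1/\phi'=1+2z\cos\beta+z^2$ through consistently so that the two separate $z\omega'$ contributions merge into a single term, and verify that the polynomial coefficients simplify precisely as stated. I anticipate no obstacle of a more serious nature.
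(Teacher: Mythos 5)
Your proposal is correct and follows essentially the same route as the paper: both reduce to the identity $\widetilde\omega=\bigl(2ag_\beta'-(1-a)zg_\beta''\bigr)/\bigl(2h_\beta'+(1-a)zh_\beta''\bigr)$ (the paper via $H=\tfrac12[h_\beta+\tfrac{1-a}{1+a}zh_\beta']$, $G=\tfrac12[g_\beta-\tfrac{1-a}{1+a}zg_\beta']$, you via the equivalent Hadamard identities applied to $zH_a'$, $zG_a'$), and then substitute the shear data of (3), your $h_\beta'=\phi'/(1+\omega)$ with $1/\phi'=1+2z\cos\beta+z^2$ being exactly the paper's explicit expressions for $h_\beta'$ and $h_\beta''$. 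The final simplification you outline does collapse to (4) as claimed.
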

\begin{proof} Since  $$F_a\ast f_\beta= H_a\ast h_\beta +\overline{G{_a}\ast g{_\beta}}=H+\overline{G}\,(say),$$
so $$H(z)=\frac{1}{2}\left[h_\beta+\frac{(1-a)}{(1+a)}zh_\beta'\right] $$ and $$ G(z)=\frac{1}{2}\left[g_\beta-\frac{(1-a)}{(1+a)}zg_\beta'\right].$$

 If $\widetilde{\omega}$ is the dilatation of $F_a\ast f_\beta$, then  \begin{equation}\displaystyle \widetilde{\omega}(z)=\frac{G'(z)}{H'(z)}=\left[\frac{2ag_\beta'-(1-a)zg_\beta''}{2h_\beta'+(1-a)zh_\beta''}\right].\end{equation}
  Since \\
  $\displaystyle g_\beta'=\omega h_\beta'$, \quad $\displaystyle h_\beta'(z)=\frac{1}{(1+\omega)(1+ze^{i\beta})(1+ze^{-i\beta})}$\,\,
 and \,\, $$\indent\hspace{.5cm}\displaystyle h_\beta''(z)=-\frac{2(cos\beta+z)(1+\omega)+\omega'(1+2zcos\beta+z^2)}{(1+\omega)^2(1+ze^{i\beta})^2(1+ze^{-i\beta})^2},$$ therefore, from (5) we get
 $$\displaystyle \widetilde{\omega}(z)=\left[\frac{2a\omega h_\beta'-(1-a)z(\omega h_\beta''+\omega'h_\beta')}{2h_\beta'+(1-a)zh_\beta''}\right].$$
Substituting the values of $h_\beta'$ and $h_\beta''$ and simplifying, we get (4).
\end{proof}
\begin{lemma} Following inequalities hold true:

\noindent(a)$\left|-2\cos\beta+4e^{-i\theta}\cos^2\beta-3e^{-i\theta}+e^{i\theta}\right|\leq\left|4-2\cos^2\beta-2\cos\beta\cos\theta\right|$ for $\displaystyle \beta \in (0,{\pi})$ and $\theta\in \mathbb{R}$.\\
(b) $\displaystyle | \cos\beta(e^{-i\theta}-5)|<|6-\cos^2\beta+2e^{-i\theta}-3e^{-i\theta}\cos^2\beta|$  for $\displaystyle \beta\in(0,{\pi})$ and $\theta\in \mathbb{R}.$\\
(c)  $\displaystyle |2(1+e^{-i\theta})-3e^{-i\theta}\cos^2\beta|<|4-\cos^2\beta|$ for $\displaystyle \beta\in \left(0,{\pi}/{2}\right)\cup\left({\pi}/{2},\pi\right)$ and $\theta\not=2n\pi, n\in\mathbb{Z}$.
\end{lemma}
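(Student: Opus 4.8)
The three inequalities all have the shape $|A|\leq|B|$ (strict in (b) and (c)), and in each case, writing $c=\cos\beta$, the right‑hand side $|B|$ is the modulus of an expression that is affine in $e^{-i\theta}$ with \emph{real} coefficients; in (a) and (c) it is even a positive real number once one uses $c^2<1$ and $|cx|\leq|c|<1$ on the stated $\beta$‑ranges. So the plan is uniform across the three parts: put $x=\cos\theta$, $y=\sin\theta$, substitute $e^{\pm i\theta}=x\pm iy$, separate real and imaginary parts, and reduce $|B|^2-|A|^2$ to a manifestly nonnegative polynomial in $x$ and $c$ after eliminating $y$ through $y^2=1-x^2$. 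Nothing conceptual is at stake; essentially everything is in recognising the resulting factorisation.

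For (a), the right‑hand side equals $2\bigl(2-c^2-cx\bigr)>0$. A direct expansion of the left‑hand expression gives real part $-2c+2x(2c^2-1)$ and imaginary part $4(1-c^2)y$; forming $|B|^2-|A|^2$ and using $y^2=1-x^2$ should collapse everything to
\[
|B|^2-|A|^2 \;=\; 12\,(1-c^2)\,(c-x)^2 \;=\; 12\sin^2\beta\,(\cos\beta-\cos\theta)^2 \;\geq\; 0,
\]
which proves (a), with equality exactly when $\cos\theta=\cos\beta$.

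For (b), using $x^2+y^2=1$ gives $|\cos\beta(e^{-i\theta}-5)|^2=c^2(26-10x)$, while writing the expression inside the modulus on the right as $(6-c^2)+e^{-i\theta}(2-3c^2)$ gives $|B|^2=(6-c^2)^2+2(6-c^2)(2-3c^2)x+(2-3c^2)^2$ (the $x^2$‑terms cancel because $|e^{-i\theta}|=1$). The difference should reduce to
\[
|B|^2-|A|^2 \;=\; (1-c^2)(4-c^2)\,(10+6\cos\theta),
\]
which is $>0$ for $\beta\in(0,\pi)$ since $1-c^2>0$, $4-c^2>0$, and $10+6\cos\theta\geq 4$. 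For (c), in the same way $|B|^2=(4-c^2)^2$ and $|A|^2=4+4(2-3c^2)x+(2-3c^2)^2$, so
\[
|B|^2-|A|^2 \;=\; 8+4c^2-8c^4-4(2-3c^2)\cos\theta .
\]
This is affine in $\cos\theta$, hence on $[-1,1]$ its minimum is attained at an endpoint: at $\cos\theta=1$ it equals $8c^2(2-c^2)$ and at $\cos\theta=-1$ it equals $8(2+c^2)(1-c^2)$, both strictly positive precisely when $c^2\in(0,1)$, i.e. $\beta\in(0,\pi/2)\cup(\pi/2,\pi)$. (The hypothesis $\theta\notin 2\pi\mathbb{Z}$ is not actually needed for this endpoint estimate; it can simply be carried along.)

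The only genuine obstacle is clerical: keeping the real/imaginary split and the repeated substitution $\sin^2\theta=1-\cos^2\theta$ straight through several lines of expansion, and then spotting the factorisations displayed above. I would guard against algebra slips by checking each reduced expression at the special values $\theta=0$, $\theta=\pi$, and $\beta=\pi/2$ before trusting it.
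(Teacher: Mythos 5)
Your proposal is correct and takes essentially the same route as the paper: square both sides and reduce $|B|^2-|A|^2$ to a polynomial in $\cos\beta$ and $\cos\theta$, and indeed your factorizations $12\sin^2\beta\,(\cos\beta-\cos\theta)^2$ in (a) and $(1-\cos^2\beta)(4-\cos^2\beta)(10+6\cos\theta)$ in (b) are exactly the ones the paper obtains. The only divergence is cosmetic: in (c) the paper factors the same affine-in-$\cos\theta$ expression directly, whereas you evaluate it at the endpoints $\cos\theta=\pm1$; both close the argument, and your side remark that the hypothesis $\theta\neq 2n\pi$ is not actually needed (given $\beta\neq\pi/2$) is consistent with the computation.
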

\begin{proof}  Let $x=\cos\beta$ and $y=\cos\theta$. Then
$$\indent\hspace{-3.5cm}\textbf{(a)} \left|-2\cos\beta+4e^{-i\theta}\cos^2\beta-3e^{-i\theta}+e^{i\theta}\right|^2-\left|4-2\cos^2\beta-2\cos\beta \,\cos\theta\right|^2$$
$\indent\hspace{2cm}=\displaystyle 12x^4-12x^2-24x^3y+24xy+12x^2y^2-12y^2$\\
$\indent\hspace{2cm}=\displaystyle-12(1-x^2)(x-y)^2$\\
$\indent\hspace{2cm}\leq0, \, {\rm as}\,\, x\in (-1,1)\,\,{\rm and}\,\,y\in[-1,1].$

\noindent \textbf{(b)} $\displaystyle|\cos\beta(e^{-i\theta}-5)|^2-|6-\cos^2\beta+2e^{-i\theta}-3e^{-i\theta}\cos^2\beta|^2$
$$ \begin{array}{clll}
\vspace{.1cm}
\hspace{-1.5cm}=-2(5x^4+3x^4y-15x^2y-25x^2+12y+20)\\
\hspace{-4.1cm}=-2(x^2-4)(x^2-1)(5+3y)\\
\hspace{-3.3cm}<0, \,{\rm for\,} x\in(-1,1)\,{\rm and}\, y\in[-1,1].
\end{array} $$

\indent\hspace{-.5cm}\textbf{(c)} \,$\displaystyle |2(1+e^{-i\theta})-3e^{-i\theta}\cos^2\beta|^2-|4-\cos^2\beta|^2$
$$\hspace{-4.2cm}=4(2x^4-x^2-3yx^2+2y-2)$$
$$\hspace{-3cm}=4[2(x^2-1)(x^2+1-y)-x^2(y+1)]$$
$\hspace{3.3cm}<0, \, {\rm for}\,\, x\in (-1,0)\cup(0,1)\,{\rm and}\,\, y\in [-1,1).$
\end{proof}
We are now in a position to prove our main results.
\begin{theorem} Let $f_\beta= h_\beta+\overline{g}{_\beta}\,\in K_H^0$ be a mapping given by (3) with dilatation $\omega_1(z)=e^{i\theta}z.$ If $F_a$ defined by (2) is a mapping in the right half-plane, then $F_a\ast f_\beta \, \in S_H^0$ and is convex in the direction of the real axis for $a \in [-{1}/{3},1)$.\end{theorem}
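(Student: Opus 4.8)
The plan is to reduce the whole statement to the single estimate $|\widetilde\omega(z)|<1$ on $E$, where $\widetilde\omega$ is the dilatation of $F_a\ast f_\beta$ supplied by Lemma~2.1. Once $F_a\ast f_\beta$ is known to be locally univalent and sense-preserving, the remaining assertions --- that $F_a\ast f_\beta\in S_H^{0}$ and is convex in the direction of the real axis --- follow from Theorem~A, which applies because $F_a$ is a right half-plane mapping with $H_a+G_a=z/(1-z)$ and $f_\beta\in K_H^{0}$. So the entire burden lies in the estimate on $\widetilde\omega$.

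First I would substitute $\omega(z)=e^{i\theta}z$ and $\omega'(z)=e^{i\theta}$ into (4) and simplify. The decisive structural fact that emerges is that $\widetilde\omega$ then has a Blaschke-type form: writing
$$D(z)=2+(1+a)(e^{i\theta}+2\cos\beta)z+2a(1+2e^{i\theta}\cos\beta)z^{2}+(3a-1)e^{i\theta}z^{3}$$
and $D^{*}(z)=z^{3}\,\overline{D(1/\bar z)}$ (the conjugate-reciprocal polynomial in the sense of Lemma~A), one obtains $\widetilde\omega(z)=e^{2i\theta}\,z\,D^{*}(z)/D(z)$; equivalently, the numerator of (4) is $e^{i\theta}z\,D^{*}(z)$. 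Since $|D^{*}(z)|=|D(z)|$ on $|z|=1$, the quotient $zD^{*}(z)/D(z)$ is continuous on $\overline{E}$, analytic in $E$ provided $D$ has no zeros there, and of modulus $1$ on the unit circle; by the maximum principle, $|\widetilde\omega(z)|=|z|\,|D^{*}(z)/D(z)|<1$ on $E$ as soon as $D(z)\neq0$ for $|z|<1$. Thus the problem reduces to a zero-location statement: $D$ has no zeros in the open disk $E$ for every $a\in[-1/3,1)$.

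This is the point at which Cohn's rule enters, applied to $D^{*}$ rather than to $D$. For $a=-1/3$ the situation degenerates, $\widetilde\omega(z)=-e^{i\theta}z$, and there is nothing to prove. For $a\in(-1/3,1)$ the constant and leading coefficients of $D^{*}$ have moduli $|3a-1|$ and $2$, so the hypothesis $|a_{0}|<|a_{n}|$ of Cohn's rule holds precisely on this range. One reduction step produces a quadratic $t_{1}$ with leading coefficient $4-(3a-1)^{2}=3(3a+1)(1-a)>0$; dividing $t_{1}$ by the positive constant $(3a+1)(1-a)$ gives the $a$-free polynomial
$$\widehat t_{1}(z)=(1+2e^{-i\theta}\cos\beta)+2(e^{-i\theta}+2\cos\beta)z+3z^{2}.$$
Here $|1+2e^{-i\theta}\cos\beta|\le 1+2|\cos\beta|<3$ for $\beta\in(0,\pi)$, so a second application of Cohn's rule is legitimate and yields a linear polynomial whose unique zero $z^{*}$ satisfies $|z^{*}|\le1$ by part~(a) of Lemma~2.2. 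Reading the zero-counts back through the two reductions, $D^{*}$ has all three of its zeros in $\overline{E}$ --- three inside $|z|=1$ when $|z^{*}|<1$, and two inside with one on $|z|=1$ when $|z^{*}|=1$ --- so $D$ has none of its zeros in the open disk $E$. In the borderline case, the zero of $D$ lying on $|z|=1$ is automatically a zero of $D^{*}$ and cancels, leaving a Blaschke product of lower degree, so the estimate on $\widetilde\omega$ still holds.

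The step I expect to be the main obstacle --- really the main idea --- is the structural reduction of the second paragraph: recognising and verifying that $\widetilde\omega=e^{2i\theta}zD^{*}/D$, i.e.\ that the numerator of (4) is the conjugate-reciprocal of its denominator up to a unimodular factor. This is what converts an unwieldy modulus inequality into a clean zero-counting problem. Once it is available, the two Cohn reductions are essentially forced, the algebra conspires so that the factor $(3a+1)(1-a)$ cancels and the hypotheses of Cohn's rule hold exactly on the asserted interval $[-1/3,1)$, and part~(a) of Lemma~2.2 supplies the one genuinely non-trivial inequality.
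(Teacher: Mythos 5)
Your proposal is correct and follows essentially the same route as the paper: reduce everything to $|\widetilde{\omega}|<1$ via Theorem A and Lemma 2.1, observe that the numerator and denominator of (4) are conjugate-reciprocal up to a unimodular factor (your $D^{*}$ is, up to the constant $2e^{-i\theta}$, exactly the paper's polynomial $p$), apply Cohn's rule twice to reach the same linear factor, and finish with Lemma 2.2(a); handling $a=-1/3$ by the degeneracy $\widetilde{\omega}(z)=-e^{i\theta}z$ also matches the paper. The only cosmetic differences are that you use the maximum principle in place of the paper's explicit Blaschke-type factorization $z\prod\frac{z+A}{1+\overline{A}z}$, you correctly note that $a=1/3$ needs no separate treatment (the paper treats it separately, unnecessarily), and your parenthetical ``the numerator of (4) is $e^{i\theta}zD^{*}(z)$'' should read $e^{2i\theta}zD^{*}(z)$ --- a harmless slip, since your displayed identity $\widetilde{\omega}=e^{2i\theta}zD^{*}/D$ is the correct one and only the modulus matters.
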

\begin{proof} Let  $\widetilde{\omega}$ be the dilatation of $F_a\ast f_\beta.$ We claim that $\displaystyle|\widetilde{\omega}(z)|<1$ for all $z\in E$ i.e., $F_a\ast f_\beta$ is locally univalent and sense-preserving. Our result shall then follow from Theorem A. By setting $\omega(z)=\omega_1(z)=e^{i\theta}z$ in (4) we get,
 $$ \begin{array}{clll}
\vspace{.5cm}
\indent\hspace{-.6cm}\widetilde{\omega}(z)=\displaystyle z\left[\frac{e^{i\theta}z^3+(\frac{1}{2}+ae^{i\theta}\cos\beta+e^{i\theta}\cos\beta+\frac{a}{2})z^2+a(2\cos\beta+e^{i\theta})z+\frac{(3a-1)}{2}}{e^{-i\theta}+(e^{-i\theta}\cos\beta +ae^{-i\theta}\cos\beta+\frac{1}{2}+\frac{a}{2})z+a(e^{-i\theta}+2\cos\beta)z^2+\frac{(3a-1)}{2}z^3}\right]\\
\indent\hspace{-11.7cm}=\displaystyle z\frac{p(z)}{p^*(z)},
\end{array} $$
 where,\\
$\indent\hspace{.1cm}p(z)=e^{i\theta}z^3+(\frac{1}{2}+ae^{i\theta}\cos\beta+e^{i\theta}\cos\beta+\frac{a}{2})z^2+a(2\cos\beta+e^{i\theta})z+\frac{(3a-1)}{2}$\\
$\indent\hspace{.8cm}=a_3z^3+a_2z^2+a_1z+a_0$ \\and\\
$\indent p^*(z)=e^{-i\theta}+(e^{-i\theta}\cos\beta+ae^{-i\theta}\cos\beta+\frac{1}{2}+\frac{a}{2})z+a(e^{-i\theta}+2\cos\beta)z^2+\frac{(3a-1)}{2}z^3=z^3\overline{p\left(\frac{1}{\overline z}\right)}$.\\
 Thus if $z_0$, $z_0\not=0,$ is a zero of $p$ then ${1}/{\overline{z}_{0}}$ is a zero of $p^*$. Therefore we can write $$\displaystyle\widetilde{\omega}(z)=z\frac{(z+A)(z+B)(z+C)}{(1+\overline {A}z)(1+\overline {B}z)(1+\overline {C}z)}.$$   In order to prove that $\displaystyle|\widetilde{\omega}(z)|<1$, it suffices to show that $|A|\leq 1$, $|B|\leq1$ and $|C|\leq1$ or equivalently, all the zeros,  $-A,-B$ and $-C,$ of the polynomial $p$ lie in or on the unit circle $|z|=1$ for $a\in[{-1}/{3},1)$. When $a=-{1}/{3},$ then $\displaystyle |\widetilde{\omega}(z)|=|-ze^{i\theta}|<1$ for all $z \in E$, so we take $a\in(-{1}/{3},{1}/{3})\cup({1}/{3},1)$ and the case when $\displaystyle a={1}/{3}$ will be settled separately.\\ Let\\
$\indent\hspace{.1cm}\displaystyle p_1(z)=\frac{\overline{a}_{3}p(z)-a_0p^*(z)}{z}$
\begin{equation}
\indent\hspace{-3.1cm}=\frac{1}{4}(1+2a-3a^2)[3z^2+2(2\cos\beta+e^{-i\theta})z+(2\cos\beta e^{-i\theta}+1)]\\
\end{equation}
$$ \begin{array}{clll}
\indent\hspace{-7.3cm}=b_2z^2+b_1z+b_0.
\end{array} $$
 As $|a_0|<|a_3|$ and $1+2a-3a^2\not=0$ for $a\in(-{1}/{3},{1}/{3})\cup({1}/{3},1)$, therefore by Cohn's rule polynomial $p_1$ has one less number of zeros inside $|z|=1$ than $p$ and same number of zeros on $|z|=1$ as $p$.
 Again, let\\
$\indent\hspace{-.1cm}\displaystyle p_2(z)=\frac{\overline{b}_{2}p_1(z)-b_0p_1^*(z)}{z}$\\
$\indent\hspace{.7cm}=\frac{1}{16}(1+2a-3a^2)^2[(9-|2\cos\beta+e^{i\theta}|^2)z+6(2\cos\beta+e^{-i\theta})-2e^{-i\theta}(2\cos\beta+e^{i\theta})^2],$
\\ where $p_1^*(z)=z^2\overline{p_1\left(\frac{1}{\overline z}\right)}$. We can again apply Cohn's rule on $p_1$ because\\ $|b_0|=|(1+2a-3a^2)\frac{1}{4}(2\cos\beta e^{-i\theta}+1)|<|(1+2a-3a^2)\frac{3}{4}|=|b_2|.$ If the zero of $p_2$ is denoted by $z_0,$ then $$z_0=\frac{-2\cos\beta+4e^{-i\theta}\cos^2\beta-3e^{-i\theta}+e^{i\theta}}{4-2\cos^2\beta-2\cos\beta \,\cos\theta}$$ and in view of Lemma 2.2 (a), $|z_0|\leq1$. Thus we have proved that all the zeros of $p_1$ and therefore of $p$ lie inside or on the unit circle $|z|=1$ for $a\in({-1}/{3},{1}/{3})\cup({1}/{3},1).$ In case when $a={1}/{3},$ then $$p(z)=\frac{1}{3}e^{i\theta}z\left[3z^2+2(2\cos\beta+e^{-i\theta})z+(2\cos\beta e^{-i\theta}+1)\right].$$
In view of (6) it can be easily verified that all the zeros of $p$ lie inside or on the circle $|z|=1$ for $a={1}/{3}$. This completes the proof.
\end{proof}
\begin{remark} By setting $a=0$ in Theorem 2.3, we get Theorem B (for $n=1$), stated in Section 1.\end{remark}

\begin{theorem} If $f_\beta\in K_H^0$ is a mapping with dilatation $\omega_2(z)=e^{i\theta}z^2$, then $F_a\ast f_\beta \, \in S_H^0$ and is convex in the direction of the real axis for $a \in [0,1)$, where $F_a$ is defined by (2).\end{theorem}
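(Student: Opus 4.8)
The plan is to imitate the proof of Theorem 2.3: we show that the dilatation $\widetilde\omega$ of $F_a\ast f_\beta$ satisfies $|\widetilde\omega(z)|<1$ on $E$, so that $F_a\ast f_\beta$ is locally univalent and sense-preserving, and then Theorem A delivers the conclusion. Observe first that $F_0$ is precisely the right half-plane mapping with dilatation $-z$, so the case $a=0$ is already contained in Theorem B (with $n=2$); hence throughout we may assume $a\in(0,1)$.

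I would first substitute $\omega(z)=e^{i\theta}z^{2}$ and $\omega'(z)=2e^{i\theta}z$ into formula (4) of Lemma 2.1 and simplify. After cancelling the factor $2e^{i\theta}z^{2}$ from the numerator and $2e^{i\theta}$ from the denominator, the computation should give
$$\widetilde\omega(z)=z^{2}\,\frac{p(z)}{p^{*}(z)},\qquad p(z)=a_0+a_1z+a_2z^{2}+a_3z^{3}+a_4z^{4},$$
with $a_0=2a-1$, $a_1=(3a-1)\cos\beta$, $a_2=a(1+e^{i\theta})$, $a_3=(a+1)e^{i\theta}\cos\beta$, $a_4=e^{i\theta}$ and $p^{*}(z)=z^{4}\overline{p(1/\overline z)}$. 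Exactly as in Theorem 2.3, the non-zero zeros of $p$ and of $p^{*}$ are reciprocal conjugates of one another, so it suffices to prove that all four zeros of $p$ lie in $\overline E=\{|z|\le 1\}$: once this holds, $\widetilde\omega$ is $z^{2}$ times a product of Blaschke-type factors each of modulus at most one, and therefore $|\widetilde\omega(z)|\le|z|^{2}<1$ for $z\in E$.

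To locate the zeros of $p$ I would apply Cohn's rule (Lemma A) three times. Since $|a_0|=|2a-1|<1=|a_4|$ for $a\in(0,1)$, the first step is legitimate and produces a cubic, which should take the form $p_1(z)=2a(1-a)\,q(z)$ with $q(z)=e^{-i\theta}\cos\beta+(1+e^{-i\theta})z+3z^{2}\cos\beta+2z^{3}$; by Cohn's rule $p_1$ has one zero fewer inside $|z|=1$ than $p$ and the same number on $|z|=1$. Because $|q(0)|=|\cos\beta|<2$, a second application of Cohn's rule to $q$ produces a quadratic $q_1(z)=d_0+d_1z+d_2z^{2}$ with
$$d_0=2(1+e^{-i\theta})-3e^{-i\theta}\cos^{2}\beta,\qquad d_1=\cos\beta\,(5-e^{-i\theta}),\qquad d_2=4-\cos^{2}\beta .$$
For $\beta\neq\pi/2$ one has $|d_0|<|d_2|$: this is precisely Lemma 2.2(c) when $\theta\neq 2n\pi$, and it reduces to the elementary inequality $|4-3\cos^{2}\beta|<4-\cos^{2}\beta$ when $e^{i\theta}=1$. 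Hence the third application of Cohn's rule is available and yields a polynomial of degree one whose unique zero is $z_0=\dfrac{d_0\,\overline{d_1}-d_2d_1}{d_2^{2}-|d_0|^{2}}$. Writing $x=\cos\beta$, $y=\cos\theta$, I expect the computation (this is where Lemma 2.2(b) should enter) to collapse to the identity
$$\bigl(d_2^{2}-|d_0|^{2}\bigr)^{2}-\bigl|d_0\,\overline{d_1}-d_2d_1\bigr|^{2}=16\,(x^{2}-1)(x^{2}-4)\,(2x^{2}-1+y)^{2},$$
whose right-hand side is $\ge 0$ for $x\in(-1,1)$, $y\in[-1,1]$; therefore $|z_0|\le 1$. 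Counting zeros back through the three reductions then forces all the zeros of $q_1$, of $q$, of $p_1$ and finally of $p$ to lie in $\overline E$. The only parameter value excluded above is $\beta=\pi/2$; there $d_1=0$ and $q_1(z)=4z^{2}+2(1+e^{-i\theta})$ outright, whose zeros satisfy $|z|^{2}=|\cos(\theta/2)|\le 1$, so again all zeros of $p$ lie in $\overline E$.

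In every case we conclude $|\widetilde\omega(z)|\le|z|^{2}<1$ on $E$, so $F_a\ast f_\beta$ is locally univalent and sense-preserving, and Theorem A gives that $F_a\ast f_\beta\in S_H^{0}$ and is convex in the direction of the real axis. I anticipate the two main obstacles to be: (i) the bookkeeping in simplifying (4) and in the two successive Cohn reductions, keeping the common factor $2a(1-a)$ and the coefficients $d_0,d_1,d_2$ exact; and (ii) the separate handling of the degenerate data $a=0$ and $\beta=\pi/2$, together with the check that the hypothesis of the third Cohn step survives when $\theta=2n\pi$. The polynomial identity above, which makes $|z_0|\le 1$ transparent, is the heart of the matter.
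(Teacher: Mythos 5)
Your proposal is correct, and its skeleton is the same as the paper's: reduce everything to local univalence via Theorem A, compute $\widetilde\omega=z^2e^{2i\theta}q/q^*$ from Lemma 2.1, and push the quartic through successive applications of Cohn's rule; your cubic $2z^3+3\cos\beta\,z^2+(1+e^{-i\theta})z+e^{-i\theta}\cos\beta$ and your quadratic coefficients $d_0,d_1,d_2$ coincide with the paper's $q_1$ and $q_2$ up to the irrelevant positive factors $2a(1-a)$ and $4(a(1-a))^2$. Where you genuinely diverge is the final step. The paper forms $q_3$ and settles its zero with Lemma 2.2(b), i.e. the condition $|d_1|\le|d_2+d_0|$; as displayed, its $q_3$ handles the complex coefficients as if they were real (it shows $d_2^2-d_0^2$ and $d_1(d_2-d_0)$ where the Cohn transform actually produces $d_2^2-|d_0|^2$ and $d_2d_1-d_0\overline{d_1}$). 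You compute the transform correctly, get $z_0=(d_0\overline{d_1}-d_2d_1)/(d_2^2-|d_0|^2)$, and prove $|z_0|\le1$ from the factorized identity $(d_2^2-|d_0|^2)^2-|d_0\overline{d_1}-d_2d_1|^2=16(x^2-1)(x^2-4)(2x^2-1+y)^2\ge0$; this identity is indeed true (the difference of the two sides has degree at most $2$ in $y$ and is even of degree at most $8$ in $x$, and it vanishes at enough sample values to vanish identically), so your route is sound, bypasses Lemma 2.2(b) entirely, and in fact repairs the conjugation slip in the paper's $q_3$. Your treatment of the degenerate parameters is also more complete: the paper invokes Lemma 2.2(c) only for $\beta\ne\pi/2$, $\theta\ne2n\pi$ and then checks only the simultaneous case $\beta=\pi/2$, $\theta=2n\pi$, whereas you cover $\theta=2n\pi$ with $\beta\ne\pi/2$ by the elementary bound $4-3\cos^2\beta<4-\cos^2\beta$ and $\beta=\pi/2$ for every $\theta$ directly from $4z^2+2(1+e^{-i\theta})$, whose roots satisfy $|z|^2=|\cos(\theta/2)|\le1$. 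Finally, you rightly dispense with the paper's separate discussion of $a=1/2$: Cohn's hypothesis $|a_0|=|2a-1|<1=|a_4|$ holds throughout $(0,1)$ even when $a_0=0$, so no case split is needed there.
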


\begin{proof}  The case when $a=0$ has already been proved by Dorff et al. [\ref{do and no}], so we take $a\in(0,1)$. Now, if $\widetilde{\omega}$ is the dilatation of $ F_a\ast f_\beta$, then letting $\omega(z)=\omega_2(z)=e^{i\theta}z^2$ in (4) we get,
$$\displaystyle\widetilde{\omega}(z)=z^2e^{2i\theta}\left[\frac{z^4+cos\beta(a+1)z^3+a(1+e^{-i\theta})z^2+e^{-i\theta}cos\beta(3a-1)z+e^{-i\theta}(2a-1)}{1+cos\beta(a+1)z+a(1+e^{i\theta})z^2+e^{i\theta}cos\beta(3a-1)z^3+e^{i\theta}(2a-1)z^4}\right].$$
\noindent In view of Theorem A, we need only to show that $F_a\ast f_\beta$ is locally univalent and sense-preserving, i.e., $\displaystyle\left|\widetilde{\omega}\right|<1$ in $E$. Consider \\
$\indent\hspace{.2cm}q(z)=z^4+\cos\beta(a+1)z^3+a(1+e^{-i\theta})z^2+e^{-i\theta}\cos\beta(3a-1)z+e^{-i\theta}(2a-1)$\\
$\indent\hspace{.9cm}=a_4z^4+a_3z^3+a_2z^2+a_1z+a_0$,\\ and \\
$ \indent\hspace{0.0cm}q^*(z)=z^4\overline{q\left(\frac{1}{\overline z}\right)}$\\
$\indent\hspace{.9cm}=1+\cos\beta(a+1)z+a(1+e^{i\theta})z^2+e^{i\theta}\cos\beta(3a-1)z^3+e^{i\theta}(2a-1)z^4.$ \\

Then $\displaystyle\widetilde{\omega}(z)=z^2e^{2i\theta}\frac{q(z)}{q^*(z)}=z^2e^{2i\theta}\frac{(z+A)(z+B)(z+C)(z+D)}{(1+\overline {A}z)(1+\overline {B}z)(1+\overline {C}z)(1+\overline {D}z)},$ \\

where $-A,-B,-C$ and $-D$ are the zeros of $q$. We shall show that these zeros lie inside or on the unit circle $|z|=1$ for $a\in(0,1)$. First we take $a\in(0,{1}/{2})\cup({1}/{2},1)$ and the case when $\displaystyle a=1/2$  will be dealt separately. For $a\in(0,{1}/{2})\cup({1}/{2},1),$ we have $|a_0|=|2a-1|<1=|a_4|$, so we can apply Cohn's rule on $q$. Let\\

$\indent\hspace{-.7cm}\displaystyle q_1(z)=\frac{\overline{a}_{4}q(z)-a_0q^*(z)}{z}$
\begin{equation}
\indent\hspace{-4.5cm}=2a(1-a)\left(2z^3+3\cos\beta z^2+(1+e^{-i\theta})z+e^{-i\theta}\cos\beta\right).\end{equation}
$\indent\hspace{.7cm}=b_3z^3+b_2z^2+b_1z+b_0.$\\
It is easy to verify that Cohn's rule is applicable to $q_1$ also. So, let\\
$\displaystyle q_2(z)=\frac{\overline{b}_{3}q_1(z)-b_0q_1^*(z)}{z}$\\
$\indent\hspace{.3cm}\displaystyle=4(a(1-a))^2\left[(4-\cos^2\beta)z^2+\cos\beta(5-e^{-i\theta})z+2(1+e^{-i\theta})-3e^{-i\theta}\cos^2\beta\right]$\\
$\indent\hspace{.3cm}\displaystyle=c_2z^2+c_1z+c_0.$\\
To apply Cohn's rule on $q_2$ we need $|2(1+e^{-i\theta})-3e^{-i\theta}\cos^2\beta|< |4-\cos^2\beta|,$ which is true in view of Lemma 2.2 (c) provided  $\beta\not=\displaystyle \pi/2$ and $\theta\not=2n\pi,n\in\mathbb{Z}.$ Therefore by applying Lemma A again on $q_2$, we have\\
$\displaystyle q_3(z)=\frac{\overline{c}_{2}q_2(z)-c_0q_2^*(z)}{z}$\\
$\indent\hspace{.3cm}=16(a(1-a))^4\{[(4-\cos^2\beta)^2-\left(2(1+e^{-i\theta})-3e^{-i\theta}\cos^2\beta\right)^2]z+$\\
$\indent\hspace{4.4cm}\cos\beta(5-e^{-i\theta})\left[(4-\cos^2\beta)-2(1+e^{-i\theta})+3e^{-i\theta}\cos^2\beta\right]\}.$\\
The only zero of $q_3$ shall lie inside or on the circle $|z|=1$, provided $$\displaystyle | \cos\beta(e^{-i\theta}-5)|<|6-\cos^2\beta+2e^{-i\theta}-3e^{-i\theta}\cos^2\beta|, $$ which is true in view of Lemma 2.2(b). Thus by Cohn's rule all zeros of $q_2$, $q_1$ and therefore of $q$ lie in or on the unit circle $|z|=1$ for $\beta\not=\displaystyle \pi/2$ and $\theta\not=2n\pi,n\in\mathbb{Z}$.\\
 \indent In case $\beta=\displaystyle \pi/2$ and $\theta=2n\pi,n\in\mathbb{Z}$, we have $\displaystyle q_2(z)=16(a(1-a))^2 (z^2+1)$ and obviously, zeros of $q_2$ lie on $|z|=1$ because $a\in(0,{1}/{2})\cup({1}/{2},1)$. Therefoer by Cohn's rule all zeros of $q_1$ and of $q$ lie in or on the unit circle $|z|=1$ in this case also. \\
 \indent Now when $\displaystyle a= {1}/{2}$, then we have $$ q(z)=\frac{z}{2}\left(2z^3+3\cos\beta z^2+(1+e^{-i\theta})z+e^{-i\theta}\cos\beta\right).$$ Keeping (7) in view, we can easily verify that in this case all zeros of $q$ lie in or on the unit circle $|z|=1$. Hence $\displaystyle\left|\widetilde{\omega}\right|<1$ for $a\in [0,1)$. \end{proof}
As pointed out in Section 1, Theorem B is not true for $n\geq3,$ however following theorem, which we state without proof (as proof runs on the same lines as of Theorem 2.5), asserts that the conclusion of Theorem 2.5 remains valid when dilatation of $f_\beta$ is taken as $e^{i\theta}z^3$ and value of real constant $a$ is restricted in the interval [1/5, 1).
\begin{theorem} Let $f_\beta\in K_H^0$ be a harmonic map with dilatation $\omega_3(z)=e^{i\theta}z^3$. Then $F_a\ast f_\beta \, \in S_H^0$ and is convex in the direction of the real axis for $ a \in [{1}/{5},1)$, where $F_a$ is defined by (2).\end{theorem}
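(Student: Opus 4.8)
The plan is to run, for the degree-five polynomial that now arises, the argument used in the proof of Theorem 2.5. By Theorem A it is enough to show that $F_a\ast f_\beta$ is locally univalent and sense-preserving, i.e. that its dilatation $\widetilde{\omega}$ satisfies $|\widetilde{\omega}(z)|<1$ on $E$. Putting $\omega(z)=\omega_3(z)=e^{i\theta}z^3$, so $\omega'(z)=3e^{i\theta}z^2$, in formula (4) of Lemma 2.1 and simplifying, the common factor $e^{i\theta}$ cancels between numerator and denominator and one obtains
$$\widetilde{\omega}(z)=z^3\,\frac{q(z)}{q^*(z)},\qquad q(z)=2e^{i\theta}z^5+2(a+1)\cos\beta\,e^{i\theta}z^4+(3a-1)z^3+2ae^{i\theta}z^2+4(2a-1)\cos\beta\,z+(5a-3),$$
with $q^*(z)=z^5\overline{q(1/\overline z)}$. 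Writing $q(z)=a_5(z+A)(z+B)(z+C)(z+D)(z+E)$ gives $q^*(z)=\overline{a_5}\prod(1+\overline{X}z)$ and hence $\widetilde{\omega}(z)=(a_5/\overline{a_5})\,z^3\prod_{X}(z+X)/(1+\overline{X}z)$; since a factor $(z+X)/(1+\overline{X}z)$ has modulus at most $1$ on $\{|z|\le1\}$ as soon as $|X|\le1$, it suffices to prove that all five zeros $-A,\dots,-E$ of $q$ lie in $\{|z|\le1\}$.

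Next I would apply Cohn's rule (Lemma A) four times. For $a\in(1/5,1)$ we have $|a_0|=|5a-3|<2=|a_5|$, so the first reduction $q_1(z)=\bigl(\overline{a_5}q(z)-a_0q^*(z)\bigr)/z$ is legitimate, and a short computation gives
$$q_1(z)=(5a-1)(1-a)\bigl[\,5z^4+8\cos\beta\,z^3+3z^2+2e^{-i\theta}z+2\cos\beta\,e^{-i\theta}\,\bigr].$$
Thus the $a$-dependence drops out entirely (just as the factor $2a(1-a)$ did in Theorem 2.5), and we are reduced to showing that the zeros of $\widetilde{q}(z)=5z^4+8\cos\beta\,z^3+3z^2+2e^{-i\theta}z+2\cos\beta\,e^{-i\theta}$ lie in $\{|z|\le1\}$. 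Applying Cohn's rule three more times yields $q_2$ (degree $3$), $q_3$ (degree $2$) and $q_4$ (degree $1$); the hypotheses for the first two of these steps reduce to $2|\cos\beta|<5$ and $2|5-8\cos^2\beta|<25-4\cos^2\beta$, which are trivially true, whereas the hypothesis for $q_3\to q_4$ and the requirement that the unique zero of $q_4$ lie in $\{|z|\le1\}$ are two genuine inequalities---the $n=3$ counterparts of Lemma 2.2(a)--(c). As in the proof of that lemma, each is established by setting $x=\cos\beta$, $y=\cos\theta$, forming the relevant difference of squared moduli, and exhibiting it as a sign-definite polynomial (one expects a perfect-square factor such as $(x-y)^2$, in analogy with the identity $-12(1-x^2)(x-y)^2$ of Lemma 2.2(a)). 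Granting these, the zero-count bookkeeping of Cohn's rule gives $r+s=5$ for $q$, i.e. all zeros of $q$ lie in the closed disc, whence $|\widetilde{\omega}(z)|<1$ on $E$ for every $a\in(1/5,1)$.

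It remains to treat the endpoint $a=1/5$ and the finitely many degenerate parameter values, exactly as the cases $a=0,1/2$ and $\beta=\pi/2,\ \theta\in2\pi\mathbb{Z}$ were handled for Theorem 2.5. At $a=1/5$ one has $|a_0|=|a_5|=2$, so Cohn's rule does not apply (indeed $q_1\equiv0$ there, reflecting that $q$ is then self-inversive up to a unimodular factor); since $q$ keeps degree five throughout the interval and its coefficients are polynomials in $a$, its zeros depend continuously on $a$, and letting $a\downarrow1/5$ shows that the zeros of $q|_{a=1/5}$ still lie in the closed disc. Similarly, at each exceptional pair $(\beta,\theta)$ for which some intermediate $q_k$ drops degree or meets equality in its Cohn hypothesis, that $q_k$ collapses to a polynomial of smaller degree whose zeros are manifestly on or inside $|z|=1$, and the conclusion persists there as well.

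The step I expect to be the main obstacle is the computational core of the second paragraph: carrying four successive Cohn reductions of a degree-five polynomial through by hand, and above all establishing the two nontrivial inequalities (the $n=3$ replacements for Lemma 2.2) at the final reduction and for the terminal linear factor. These involve expressions that are quartic in $\cos\beta$, and---as with Lemma 2.2---the real work lies in finding and verifying the explicit factorization that makes the sign transparent for $x\in(-1,1)$, $y\in[-1,1]$. A secondary point needing care is the precise list of degenerate parameter sub-cases, so that the zero count furnished by Cohn's rule is never invalidated.
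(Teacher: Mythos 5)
Your proposal takes exactly the route the paper intends: Theorem 2.6 is stated there without proof, with the remark that it runs along the same lines as Theorem 2.5, and your outline (Theorem A plus Lemma 2.1 with $\omega=e^{i\theta}z^{3}$, the factorization $\widetilde{\omega}(z)=z^{3}q(z)/q^{*}(z)$, repeated Cohn reductions, and separate treatment of $a=1/5$ and the degenerate $(\beta,\theta)$) is precisely that argument, in fact carried further than the paper does---your $q_{1}(z)=(5a-1)(1-a)\bigl[5z^{4}+8\cos\beta\,z^{3}+3z^{2}+2e^{-i\theta}z+2\cos\beta\,e^{-i\theta}\bigr]$ and the first two Cohn hypotheses are correct, the only slip being that in your displayed $q$ the coefficients of $z^{3}$ and $z^{2}$ should read $2ae^{i\theta}$ and $(3a-1)$ respectively (they are transposed, though your $q_{1}$ shows the computation was done with the right polynomial), and at $a=1/5$ one can note directly that $q=-e^{i\theta}q^{*}$, so $\widetilde{\omega}(z)=-e^{i\theta}z^{3}$, instead of invoking continuity. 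The two terminal inequalities (the $n=3$ analogues of Lemma 2.2) are granted rather than verified, but since the paper supplies no more detail either, your approach matches its proof in substance.
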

 \begin{remark} It is expected that, keeping $\omega_n(z)=e^{i\theta}z^n\,(\theta\in\mathbb{R}, n\in\mathbb{N})$ as the dilatation of $f_\beta$, the convolution  $F_a\ast f_\beta \, \in S_H^0$ and is  convex in the direction of the real axis for $ a \in [\frac{n-2}{n+2},1).$ The authors have verified the case when $n=4$ but when $n$ increases calculations become extremely cumbersome.  \end{remark}
\noindent{\emph{Acknowledgement: The first author is thankful to the Council of Scientific and Industrial Research, New Delhi, for financial support vide grant no. 09/797/0006/2010 EMR-1.}}
{

\end{document}